\newtheorem{theorem}{Theorem}
\newtheorem{conjecture}[theorem]{Conjecture}
\newtheorem{lemma}[theorem]{Lemma}
\newenvironment{proof}[1][Proof]{\noindent\textbf{#1.} }{\ \rule{0.5em}{0.5em}}
\begin{document}

\begin{center}
{\LARGE Bounded Orbits of Quadratic Collatz-type Recursions}

\medskip

H. SEDAGHAT \footnote{Email: hsedagha@vcu.edu}

\end{center}

\bigskip

\begin{abstract}
We characterize all bounded orbits of two similar Collatz-type quadratic
mappings of the set of non-negative integers. In one case, where cycles of all
possible lengths may occur, an orbit is bounded if and only if it reaches a
cycle. For the other map we prove that every bounded orbit must reach 0 (in
particular, there are no cycles).

\end{abstract}

\medskip

\section{Introduction}

Let $\mathbb{N}$\ be the set of all positive integers and $\mathbb{N}%
_{0}=\{0,1,2,3,\ldots\}$ be the set of all non-negative integers. Consider the
function $Q:\mathbb{N}_{0}\rightarrow\mathbb{N}_{0}$ that is defined as:%
\begin{equation}
Q(n)=\left\{
\begin{array}
[c]{l}%
n/2\quad\text{if }n\text{ is even}\\
\binom{n}{2}\text{\quad if }n\text{ is odd}%
\end{array}
\right.  \label{dc2}%
\end{equation}
where
\[
\binom{n}{2}=\frac{n(n-1)}{2}%
\]

\noindent is the binomial coefficient. We call the function $Q$ in (\ref{dc2})
the \textit{divide-or-choose-2 rule}.

Note that for odd $n$ the function $Q$ is a \textit{multiplicative} version of
the Collatz-type map%
\[
F(n)=\left\{
\begin{array}
[c]{l}%
n/2\qquad\text{if }n\text{ is even}\\
\frac{3n-1}{2}\text{\quad\ if }n\text{ is odd}%
\end{array}
\right.
\]
in the following sense: Since $n-1$ is even for odd $n$ we may write%
\[
\binom{n}{2}=n\left(  \frac{n-1}{2}\right)  ,\qquad\frac{3n-1}{2}=n+\frac
{n-1}{2}%
\]

Equivalently, $F$ is the \textit{additive} or \textquotedblleft linear"
version of $Q$. It is a variant of the better known (compressed) Collatz
function%
\[
T(n)=\left\{
\begin{array}
[c]{l}%
n/2\qquad\text{if }n\text{ is even}\\
\frac{3n+1}{2}\text{\quad\ if }n\text{ is odd}%
\end{array}
\right.
\]

While all orbits of $T$ are conjectured to reach the base cycle
$\{1,2,1,2,\ldots\}$ from any initial value $n\in\mathbb{N}$ the variant $F$
generates nontrivial cycles \cite{LG}.

\medskip

The variant of $Q$ that represents a multiplicative version of the Collatz
function $T$ is%
\[
\left\{
\begin{array}
[c]{l}%
\ n/2\quad\quad\quad\text{if }n\text{ is even}\\
\binom{n+1}{2}\text{\quad\ \ if }n\text{ is odd}%
\end{array}
\right.
\]

The orbits that are generated by this function are qualitatively similar to
those generated by $Q$ so we need not consider this map in detail. Instead, we
study the following variant of $Q$
\[
S(n)=\left\{
\begin{array}
[c]{l}%
\ n/2\quad\ \text{if }n\text{ is even}\\
\frac{n^{2}-1}{4}\text{\quad if }n\text{ is odd}%
\end{array}
\right.
\]

The function $S$ has a symmetric expression in the sense that
\[
\frac{n^{2}-1}{4}=\left(  \frac{n-1}{2}\right)  \left(  \frac{n+1}{2}\right)
\]
so we call it the \textit{symmetric rule}.

\medskip

Our goal in this paper is two-fold: We show that the orbits of $Q$ may reach
cycles of all possible lengths. All cycles contain an odd number of type
$2^{m}+1$ for some $m\in\mathbb{N}_{0}$ and an orbit of $Q$ that does not
reach a cycle is shown to be unbounded. This result characterizes all bounded
orbits that may be generated by iterating $Q$. We similarly show that the
orbits of $S$ must either reach 0 or be unbounded and each orbit that reaches
0 contains a number of type $2^{m}\pm1$ for some $m\in\mathbb{N}_{0}$. The
existence of an unbounded orbit (i.e. an orbit that does not satisfy the
preceding conditions and \textquotedblleft escapes to infinity") for either
$Q$ or $S$ is a separate problem and not discussed in this paper.

\medskip

A note about the domains of the above quadratic maps: Both may be extended to
the set $\mathbb{Z}$ of all integers rather trivially in the sense that every
orbit with a negative initial value enters $\mathbb{N}_{0}$ after a finite
number of steps and stays there since $\mathbb{N}_{0}$ is invariant under both
mappings. We do not consider extensions to $\mathbb{Z}$ here.

\section{The divide-or-choose-2 rule}

If $x_{0}\in\mathbb{N}_{0}$ then the numbers%
\[
x_{0},Q(x_{0}),Q(Q(x_{0})),\ldots
\]
constitute an \textit{orbit} or \textit{trajectory} of the quadratic recursion%
\begin{equation}
x_{n+1}=Q(x_{n}) \label{q1}%
\end{equation}
in $\mathbb{N}_{0}$. If $x_{m}=x_{0}$ for some $m\in\mathbb{N}$ then the
numbers $x_{0},x_{1},\ldots,x_{m-1}$ repeat so we have a \textit{periodic
orbit with period} $m$ or equivalently, an $m$-\textit{cycle}, i.e. a cycle of
length $m$. A number $x_{0}$ that lies on a cycle is called a \textit{periodic
point} of $Q$. If $m=1$ then the cycle is often called a \textit{fixed point}
of $Q$.

\medskip

The divide-or-choose-2 rule works as follows: If the value of $x_{0}$ is even
then we \textit{divide }it by 2 to get the next term:
\[
x_{1}=Q(x_{0})=\frac{x_{0}}{2}%
\]

On the other hand, if $x_{0}$ has odd value then the next term is $x_{0}$
\textit{choose }2:%
\[
x_{1}=\binom{x_{0}}{2}=x_{0}\left(  \frac{x_{0}-1}{2}\right)
\]

Let $m$ be a positive integer and consider $x_{0}=2^{m}.$ Then%
\[
x_{1}=2^{m-1},\ x_{2}=2^{m-2},\ \ldots,x_{m-1}=2,\ x_{m}=1
\]

At this point, since 1 is odd,%
\[
x_{m+1}=0
\]
and with 0 being even, $x_{n}=0$ for all $n>m$. The repeating number 0 is a
fixed point of (\ref{q1}) since as an even number, $Q(0)=0/2=0.$ One more
fixed point is 3, since%
\[
Q(3)=\binom{3}{2}=3.
\]

By examining the expressions for odd and even numbers it is easy to see that
$Q$ has no other fixed points.

\medskip

More generally, if $x_{0}=2^{m}k$ is an arbitrary even number where
$m\in\mathbb{N}_{0}$ and $k$ is odd then%
\begin{equation}
x_{1}=2^{m-1}k,\ x_{2}=2^{m-2}k,\ \ldots,\ x_{m}=k \label{evo}%
\end{equation}

Also every odd number larger than 1 can be written as $2^{m}k+1$ where
$m\in\mathbb{N}_{0}$ and $k$ is odd. Note that
\begin{equation}
Q(2^{m}k+1)=(2^{m}k+1)\left(  \frac{2^{m}k+1-1}{2}\right)  =2^{m-1}k(2^{m}k+1)
\label{odo}%
\end{equation}
so if $x_{0}=2^{m}k+1$ then%
\[
x_{1}=2^{m-1}kx_{0}%
\]

If $m>1$ then $x_{2}$ is just half of $x_{1}$ so that $x_{2}=2^{m-2}kx_{0}$.
By induction%
\begin{equation}
x_{m}=kx_{0} \label{odom}%
\end{equation}

This observation has an interesting consequence: if $k=1$ then $x_{m}=x_{0}$
and we obtain an $m$-cycle.

Notice that since $m$ is any positive integer in the above argument, we have
proved the following.

\medskip

\begin{lemma}
For every positive integer $m$ the recursion (\ref{q1}) has an $m$-cycle given
by the numbers (in the order shown):%
\begin{equation}
2^{m}+1\rightarrow2^{m-1}(2^{m}+1)\rightarrow2^{m-2}(2^{m}+1)\rightarrow
\ldots\rightarrow2(2^{m}+1)\rightarrow2^{m}+1 \label{cyc}%
\end{equation}

\end{lemma}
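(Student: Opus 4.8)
The plan is to obtain the cycle by specializing to $k=1$ the two computations already recorded as (\ref{evo}) and (\ref{odo}); no genuinely new work is needed, only careful bookkeeping of the powers of $2$.

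First I would take $x_{0}=2^{m}+1$. Since $m\geq 1$ this is an odd integer exceeding $1$, of the form $2^{m}k+1$ with $k=1$ odd, so (\ref{odo}) gives at once $x_{1}=Q(x_{0})=2^{m-1}(2^{m}+1)$, which is the first arrow of (\ref{cyc}).

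Next I would observe that $x_{1}=2^{m-1}(2^{m}+1)$ is of the form $2^{m-1}k'$ with $k'=2^{m}+1$ odd. Hence (\ref{evo}), applied with exponent $m-1$ and odd part $2^{m}+1$, shows that each subsequent step of $Q$ removes one factor of $2$, so that $x_{2}=2^{m-2}(2^{m}+1),\ x_{3}=2^{m-3}(2^{m}+1),\ \ldots,\ x_{m}=2^{m}+1=x_{0}$. (When $m=1$ this chain is empty and one simply has $x_{1}=2^{0}(2^{1}+1)=3=x_{0}$.) Thus the orbit through $2^{m}+1$ returns to its starting value after exactly $m$ steps and realizes precisely the chain (\ref{cyc}). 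To see that the period is exactly $m$ rather than a proper divisor, I would note that the $m$ displayed values are pairwise distinct: $2^{m}+1$ is odd while the remaining $m-1$ terms are even, and among the latter the exponent of $2$ strictly increases, so $2^{i}(2^{m}+1)\neq 2^{j}(2^{m}+1)$ for $1\leq i<j\leq m-1$. Therefore (\ref{q1}) has a genuine $m$-cycle, and since $m\in\mathbb{N}$ was arbitrary this holds for every positive integer $m$.

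I do not anticipate any substantive obstacle: the whole argument is a direct reading of (\ref{evo}) and (\ref{odo}). The only places requiring care are matching the exponent ranges in the display (\ref{cyc}), the degenerate case $m=1$ (where the ``cycle'' is just the fixed point $3$, consistent with $Q(3)=\binom{3}{2}=3$ noted earlier), and the verification that the listed terms are distinct so that the period equals $m$ exactly.
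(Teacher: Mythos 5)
Your proposal is correct and follows essentially the same route as the paper, which also specializes the computation $Q(2^{m}k+1)=2^{m-1}k(2^{m}k+1)$ together with the halving chain (\ref{evo}) to the case $k=1$ to obtain $x_{m}=x_{0}$. Your added check that the $m$ listed terms are pairwise distinct (so the period is exactly $m$, not a proper divisor) is a small refinement the paper leaves implicit.
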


\medskip

The exceptional value $x_{0}=1$ is mapped to 0 and the 1-cycle 0 is reached.
We can also infer from (\ref{evo}) that each number of type $2^{j}(2^{m}+1)$
reaches the $m$-cycle of the above lemma in $j$ steps for every $j\in
\mathbb{N}$. But these are not the only numbers that may reach cycles. Suppose
that $k>1$ in (\ref{odo}), say, $k=k_{0}\geq3$. Starting with $x_{0}=2^{m_{0}%
}k_{0}+1$, by (\ref{odom}) the orbit reaches%
\[
x_{m_{0}}=k_{0}x_{0}\geq3x_{0}%
\]

Therefore, on the down-swing the orbit does not reach $x_{0}$ to form a cycle
but instead, it reaches a larger odd number $k_{0}x_{0}$. Set $k_{0}%
x_{0}=2^{m_{1}}k_{1}+1$ where $m_{1}$ is a positive integer and $k_{1}$ is
odd. Calculating as before,%
\[
x_{m_{0}+m_{1}}=k_{1}x_{m_{0}}=k_{1}k_{0}x_{0}%
\]

We have two possible cases: $k_{1}=1$ in which case $x_{m_{0}+m_{1}}=x_{m_{0}%
}$ and the orbit has reached an $m_{1}$-cycle. Otherwise, \thinspace$k_{1}>1$
and
\[
x_{m_{0}+m_{1}}=k_{1}x_{m_{0}}\geq3x_{m_{0}}\geq3^{2}x_{0}%
\]

This process may be repeated by setting $k_{1}x_{m_{0}}=2^{m_{2}}k_{2}+1$ as
long as the coefficients $k_{2}$ etc remain larger than 1. We obtain the
general expression%
\[
x_{m_{0}+m_{1}+\cdots+m_{p}}=k_{p}x_{p-1}=k_{p}\cdots k_{1}k_{0}x_{0}%
\geq3^{p+1}x_{0}%
\]
where $p$ is a positive integer. If $k_{p}>1$ for all $p$ then this process
generates ever larger values that grow infinitely large. Therefore, either the
orbit reaches a cycle or it is unbounded. Since orbits starting with an even
number always reach an odd number, the above argument proves the following
characterization of the bounded orbits of (\ref{q1}).

\medskip

\begin{theorem}
The recursion (\ref{q1}) has cycles of type (\ref{cyc}) of all possible
lengths. Every orbit of (\ref{q1}) either reaches such a cycle or it is unbounded.
\end{theorem}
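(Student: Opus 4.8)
The first assertion needs no new work: the Lemma already produces, for every $m\in\mathbb{N}$, the $m$-cycle (\ref{cyc}) built around $2^{m}+1$, so cycles of type (\ref{cyc}) of all lengths occur.

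For the dichotomy the plan is to show that any orbit which is not unbounded must reach a cycle, by following the odd numbers the orbit visits. First I would reduce to an odd initial value: if $x_{0}=2^{a}k$ with $a\geq1$ and $k$ odd, then by (\ref{evo}) the orbit reaches $k$ after $a$ steps; if $k=1$ the next step gives $0$ and the orbit is trapped in the fixed point $0$, while if $k\geq3$ the orbit has reached an odd number $\geq3$. So assume the orbit sits at an odd value $y_{0}\geq3$. If $y_{0}=3$ it is the fixed point $3=2^{1}+1$, a $1$-cycle of type (\ref{cyc}); otherwise write $y_{0}=2^{m_{0}}k_{0}+1$ with $m_{0}\geq1$ and $k_{0}$ odd, which is possible since $y_{0}-1$ is even and positive. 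By (\ref{odom}) the orbit reaches $k_{0}y_{0}$ after $m_{0}$ more steps: if $k_{0}=1$ then $y_{0}=2^{m_{0}}+1$ and $k_{0}y_{0}=y_{0}$, so the orbit is exactly the $m_{0}$-cycle (\ref{cyc}); if $k_{0}\geq3$ then $y_{1}:=k_{0}y_{0}\geq3y_{0}$ is again an odd number exceeding $1$.

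Then I would iterate this. Having reached an odd $y_{p}>1$, write $y_{p}=2^{m_{p}}k_{p}+1$ with $m_{p}\geq1$ and $k_{p}$ odd; by (\ref{odom}) the orbit reaches $y_{p+1}:=k_{p}y_{p}$ after $m_{p}$ further steps. As long as $k_{0},\dots,k_{p}\geq3$ we get $y_{p+1}\geq3^{\,p+1}y_{0}$. Hence exactly one of two things occurs: either $k_{p}=1$ for some $p$, in which case $y_{p}=2^{m_{p}}+1$ and from $y_{p}$ on the orbit is the $m_{p}$-cycle (\ref{cyc}); or $k_{p}\geq3$ for every $p$, in which case the subsequence $(y_{p})$ of orbit values satisfies $y_{p}\geq3^{p}y_{0}\to\infty$ and the orbit is unbounded. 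Combined with the reduction to odd values and the $0$ and $3$ cases, this yields the stated dichotomy.

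I do not anticipate a real obstacle: the argument is essentially bookkeeping with the identities (\ref{evo}), (\ref{odo}) and (\ref{odom}) already established. The one place that needs care is the reduction step — checking that every orbit, after finitely many halvings, lands either on $0$ or on an odd number, and that each odd number $>1$ subsequently met can be written as $2^{m}k+1$ with $m\geq1$ and $k$ odd so that (\ref{odom}) applies and the ``milestone'' subsequence $(y_{p})$ is well defined. A purely cosmetic point is that the fixed point $0$ (reached only from $x_{0}=2^{a}$) is not literally of the form (\ref{cyc}), so I would phrase the conclusion so that ``reaches a cycle'' covers both $0$ and the cycles (\ref{cyc}); one could also add the remark that, since every cycle is itself a bounded orbit, the dichotomy shows these are in fact \emph{all} the cycles of (\ref{q1}).
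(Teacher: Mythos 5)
Your proposal is correct and follows essentially the same route as the paper: reduce to odd values via (\ref{evo}), write each odd milestone as $2^{m}k+1$, apply (\ref{odom}), and note that either some $k_{p}=1$ (a cycle of type (\ref{cyc}) is reached) or the values grow at least geometrically by factors of $3$, forcing unboundedness. Your extra care with the $x_{0}=2^{a}$ case (orbit absorbed at the fixed point $0$, which is not literally of the form (\ref{cyc})) is a legitimate small refinement of a point the paper passes over quickly, but it does not change the argument.
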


We emphasize that the proof of the above theorem does not establish the
existence of orbits that never reach cycles so it does not imply that
(\ref{q1}) has any unbounded orbits, i.e. orbits that \textquotedblleft escape
to infinity". On the other hand, the theorem gives a complete characterization
of all bounded orbits; this much is not known for the classic recursion of Collatz.

\medskip

It is expected that orbits are generally unbounded given the quadratic growth
rate in the odd case and because at each iteration, there is a 50 percent
chance that $x_{n}$ is divided by 2, and if not then it is \textit{squared}
(essentially). 

\begin{conjecture}
The recursion (\ref{q1}) has an unbounded orbit.
\end{conjecture}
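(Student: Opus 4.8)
\medskip

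\noindent\textbf{A proof strategy.} The plan is not to exhibit one unbounded orbit directly but to prove the stronger statement that the set $B\subseteq\mathbb N_{0}$ of starting values producing bounded orbits has natural density zero; since a density-zero set is not all of $\mathbb N_{0}$, this gives the conjecture. First I would pass to the odd-number dynamics already implicit in the discussion preceding the theorem above: every orbit starting at an even number reaches an odd one, and a maximal run of one ``choose'' step followed by ``divide'' steps is precisely the map $g(x)=x\,\kappa(x)$ on odd integers $x>1$, where $\kappa(x)$ is the odd part of $x-1$. Since $\kappa(x)\ge 1$ always and $\kappa(x)=1$ exactly when $x=2^{m}+1$, every $g$-orbit is non-decreasing, is multiplied by a factor at least $3$ at each step that is not already a fixed point, and a $Q$-orbit is bounded if and only if the associated $g$-orbit reaches one of the fixed points $2^{m}+1$, equivalently reaches a cycle of the form (\ref{cyc}). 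So, up to the density-zero set of powers of $2$ and the point $1$, $B$ is the union over $m\ge 1$ of the $g$-basins $\mathcal B_{m}=\{x:g^{j}(x)=2^{m}+1\text{ for some }j\ge 0\}$.

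The next step is to estimate $|\mathcal B_{m}\cap[1,N]|$. The growth bound $g^{j}(x)\ge 3^{j}x$ (valid until a fixed point is hit) shows that $x\in\mathcal B_{m}$ reaches $2^{m}+1$ within $O(m)$ steps, that every vertex of the backward tree rooted at $2^{m}+1$ is at most $2^{m}+1$, and hence $\mathcal B_{m}\subseteq[1,2^{m}+1]$; thus only the $O(\log N)$ roots with $2^{m}+1\le N$ have their whole basin inside $[1,N]$. One then bounds each backward tree level by level: a vertex of value $q$ has at most one $g$-preimage per divisor $x\mid q$ (the equation $\kappa(x)=q/x$ forces $x(x-1)/q$ to be a fixed power of $2$), so branching is governed by the divisor function $d(q)$. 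Feeding the $O(m)$ depth bound, the $d(q)$ branching bound, and a union bound over $m\le\log_{2}N+O(1)$ into the count is intended to give $|B\cap[1,N]|=o(N)$.

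The hard part will be exactly this backward-tree estimate, and this is where the difficulty familiar from the Collatz conjecture itself resurfaces. Using only $d(q)\le q^{o(1)}$ at each of the roughly $\log N$ levels yields the useless bound $|\mathcal B_{m}\cap[1,N]|\le N^{o(\log N)}$; a real argument would need arithmetic structure making the backward trees of the numbers $2^{m}+1$ provably sparse, and no such input is presently available. Two alternatives appear equally blocked: (i) constructing an explicit unbounded orbit from a polynomially parametrised family $x=\varphi(t)$ with $g(\varphi(t))=\varphi(\psi(t))$ and $\varphi(t)\to\infty$ fails on the natural attempts, because requiring the orbit to stay in a fixed residue class $x\equiv 3\pmod{2^{k}}$ (equivalently, freezing the $2$-adic valuation of $x-1$) is not preserved by $g$ but forces passage to $2^{k+1}$, and iterating drives the $2$-adic limit of the orbit to the fixed point $3$ instead of to infinity; and (ii) extending $g$ to a continuous self-map of a compactification (such as $\widehat{\mathbb Z}$ or a product of $p$-adic integers) and applying a compactness or K\"onig-type argument to find an infinite orbit avoiding the closure of $\{2^{m}+1:m\ge 1\}$ runs into the usual gap, namely recovering from such a point an orbit that actually lies in $\mathbb N_{0}$. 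In short, the reduction and the density-argument skeleton are routine; the quantitative control of the basins $\mathcal B_{m}$ is the genuine obstacle, of roughly the same order of difficulty as the open question about $T$ recalled in the introduction.
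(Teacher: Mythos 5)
This statement is stated in the paper as a \emph{conjecture}: the paper offers no proof, only the heuristic that each iterate is either halved or (essentially) squared, plus the remark that one unbounded orbit would yield infinitely many. So there is no paper proof to compare against, and your proposal does not close the gap either --- as you yourself say, it is a strategy outline whose central step is missing. The reduction to the odd-step map $g(x)=x\,\kappa(x)$, the identification of its fixed points with $2^{m}+1$, the growth bound $g^{j}(x)\ge 3^{j}x$ off the fixed points, and the equivalence ``bounded $\Leftrightarrow$ reaches a cycle of type (\ref{cyc})'' are all correct and are exactly the content of the paper's Theorem 1; but the conjecture asks for strictly more, namely that some orbit actually avoids every basin, and your route to that (natural density zero of $B$) hinges on a sparsity estimate for the backward trees of the numbers $2^{m}+1$ for which you supply no argument and explicitly acknowledge none is available. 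A divisor-function bound on branching cannot work on its own, as you note, since it compounds over $\Theta(\log N)$ levels.

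Beyond the openly admitted obstacle, one step of the skeleton as written is also flawed: you truncate the union bound at $m\le\log_{2}N+O(1)$, but $x\in\mathcal B_{m}\cap[1,N]$ only forces $x\le 2^{m}+1$, not $2^{m}+1\le N$. Since $g$ increases values (roughly squaring at each non-fixed step), a small $x$ can perfectly well feed into a fixed point $2^{m}+1$ far larger than $N$, so $\mathcal B_{m}\cap[1,N]$ can be nonempty for $m$ much larger than $\log_{2}N$; the count $\lvert B\cap[1,N]\rvert$ must control all such $m$ (or be reorganized as a forward-orbit count), which makes the needed estimate even harder than stated. In short: the conjecture remains open, your reduction reproduces what the paper already proves, and the new quantitative input that would be required is absent.
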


If there is an unbounded orbit then there are infinitely many, for if an (odd)
number $x_{0}=n$ leads to an unbounded orbit then so do the numbers $2^{m}n$
for all $m\in\mathbb{N}$. The following makes a more specific proposal:

\begin{conjecture}
\textit{Orbits containing an odd number of type }$2^{m}-1$\textit{ are
unbounded} for all $m\geq3$.
\end{conjecture}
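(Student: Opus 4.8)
The plan is to reduce the conjecture, for the orbit of $x_{0}=2^{m}-1$, to an elementary divisibility obstruction. The first and main observation I would establish is the invariant that \emph{every} term of this orbit is a positive multiple of $2^{m}-1$. This follows by induction on $n$: $x_{0}=2^{m}-1$; if $(2^{m}-1)\mid x_{n}$ and $x_{n}$ is even, then since $2^{m}-1$ is odd the quotient $x_{n}/(2^{m}-1)$ is even, so $(2^{m}-1)\mid x_{n}/2=x_{n+1}$; and if $x_{n}$ is odd then $x_{n+1}=\binom{x_{n}}{2}=x_{n}\cdot\frac{x_{n}-1}{2}$ is an integer multiple of $x_{n}$, hence of $2^{m}-1$. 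Since $2^{m}-1\geq 7$ for $m\geq 3$, the same induction shows $x_{n}\geq 2^{m}-1>1$ for all $n$, so in particular the orbit never meets $1$ and therefore never reaches the fixed point $0$.

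Next I would invoke the structure already in hand. A bounded orbit takes only finitely many values in $\mathbb{N}_{0}$, hence is eventually periodic and enters a cycle. By Lemma 1 and the analysis preceding Theorem 2, the only cycles of (\ref{q1}) are $\{0\}$ and the cycles (\ref{cyc}); every cycle of the second kind contains the odd number $2^{l}+1$ for its length parameter $l\geq 1$. Our orbit avoids $0$, so if it were bounded it would enter some cycle (\ref{cyc}) and the number $2^{l}+1$ would occur among its terms; by the first paragraph this would force $(2^{m}-1)\mid 2^{l}+1$.

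The proof would then finish with the number-theoretic fact that for $m\geq 3$ there is no integer $l\geq 0$ with $(2^{m}-1)\mid 2^{l}+1$. Writing $l=am+r$ with $0\leq r<m$ and using $2^{m}\equiv 1\pmod{2^{m}-1}$ gives $(2^{m}-1)\mid 2^{r}+1$; but $2\leq 2^{r}+1\leq 2^{m-1}+1<2^{m}-1$ when $m\geq 3$, so this is impossible. Hence a bounded orbit through $2^{m}-1$ cannot exist, i.e.\ the orbit is unbounded; the case of an orbit merely \emph{containing} $2^{m}-1$ (rather than starting there) is the same, since by (\ref{evo}) any orbit containing $2^{j}(2^{m}-1)$ reaches $2^{m}-1$ after finitely many steps.

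I do not anticipate a genuinely hard step; the one point requiring care is the appeal in the second paragraph to a \emph{complete} list of the cycles of $Q$, since Theorem 2 is phrased as an existence statement for the cycles (\ref{cyc}). I would therefore first isolate the lemma ``every cycle of (\ref{q1}) is $\{0\}$ or one of the cycles (\ref{cyc})'', which is exactly the $k_{0}=1$ case of the pre-Theorem-2 argument (when $k_{0}\geq 3$ the odd sub-orbit is strictly increasing, precluding periodicity). An alternative that sidesteps the cycle classification altogether is to follow the non-decreasing sequence of odd values $a_{0}=2^{m}-1$, $a_{k+1}=a_{k}\cdot\mathrm{oddpart}(a_{k}-1)$ directly: it is strictly increasing unless some $a_{k}$ has the form $2^{l}+1$, which $(2^{m}-1)\mid a_{k}$ forbids for $m\geq 3$, and a strictly increasing integer sequence is unbounded while the orbit contains every $a_{k}$.
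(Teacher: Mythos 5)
The paper offers no proof of this statement -- it appears only as a conjecture, proposed as a concrete route to the earlier conjecture that unbounded orbits exist -- so there is nothing of the author's to compare your argument against; it has to stand on its own, and as far as I can verify it does. The decisive ingredient, which the paper never uses, is your divisibility invariant: odd divisors of a term persist under $Q$, because halving an even number cannot remove an odd factor, while the odd branch multiplies $x_{n}$ by the integer $(x_{n}-1)/2$. Granted that, the rest is the paper's own machinery plus the elementary congruence fact that $2^{m}-1$ cannot divide $2^{l}+1$ when $m\geq3$ (reduce the exponent modulo $m$; the residue $2^{r}+1$ with $0\leq r<m$ lies strictly between $0$ and $2^{m}-1$). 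Your main route does require the complete list of cycles, which Theorem 2 gives only implicitly (read literally, its dichotomy also omits the orbits that terminate at the fixed point $0$); you rightly isolate this as a lemma, and your repair is sound: an all-even cycle other than $\{0\}$ is impossible, $1$ is not periodic, and a cycle through an odd $y>1$ for which $y-1$ has odd part at least $3$ is impossible since the subsequent odd values never return down to $y$. Your closing alternative is the cleaner write-up: the successive odd values $a_{0}=2^{m}-1$, $a_{k+1}=a_{k}\cdot(\text{odd part of }a_{k}-1)$ are all positive multiples of $2^{m}-1$, hence never equal to any $2^{l}+1$, hence every multiplier is at least $3$, so $a_{k}\geq3^{k}(2^{m}-1)$ and the orbit, which contains every $a_{k}$, is unbounded; this bypasses both the cycle classification and the bounded-implies-eventually-periodic step. (The final appeal to (\ref{evo}) is unnecessary: if an orbit merely contains $2^{m}-1$, its tail from that term on is the orbit you analyzed.) Do note the weight of the claim: the argument exhibits explicit unbounded orbits, e.g.\ the orbit of $7$, and so would settle the paper's first conjecture as well; the two pillars -- the invariant and the congruence -- therefore deserve a careful double-check, but I find no gap in either.
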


It is worth a mention that numerical simulations do not prove this statement
and they may even lead to false conclusions on digital computers. The reason
seems to be that for large $k$ or $m$, the crucial distinction between 
$2^{m}k$ and $2^{m}k\pm1$ is typically missed, causing the software to
produce a cycle where none exists.

\medskip

Next, note that for every $m\in\mathbb{N}$ repeated applications of $Q$ to a
number of type $2^{m}k$ where $k$ is odd leads to $k$ through a monotonically
decreasing chain $m$ steps long. In particular, \textit{decreasing} chains of
arbitrary length are possible. The following result shows not only that orbits
with \textit{increasing} chains of arbitrary length occur but also gives a
type of number that leads to them.

\begin{theorem}
Let $x_{0}=2^{m}+3$ where $m\geq2$. Then for $j=1,2,\ldots,m-1$%
\begin{align}
x_{j}  & =2^{m-j}(x_{0}+2)(x_{1}+2)\cdots(x_{j-1}+2)+3\label{t1}\\
x_{m}  & =x_{m-1}[(x_{0}+2)(x_{1}+2)\cdots(x_{m-2}+2)+1]\label{t2}%
\end{align}

In particular, $x_{j}$ is odd for each $j$ and $x_{0}<x_{1}<\cdots<x_{m-1}$ is
an increasing chain reaching the even number $x_{m}$.
\end{theorem}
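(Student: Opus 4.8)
The plan is to prove \eqref{t1} and \eqref{t2} simultaneously by induction on $j$, carrying along the extra assertion that $x_0,x_1,\dots,x_{m-1}$ are all odd; this oddness is exactly what lets us apply the ``choose-2'' branch $Q(n)=n(n-1)/2$ at each of the first $m$ steps. It is convenient to abbreviate $P_{-1}=1$ and $P_j=(x_0+2)(x_1+2)\cdots(x_j+2)$ for $j\ge 0$, so that \eqref{t1} becomes the single identity $x_j=2^{m-j}P_{j-1}+3$ and \eqref{t2} becomes $x_m=x_{m-1}(P_{m-2}+1)$.

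First I would check the base case $j=1$. Since $m\ge 2$, the number $2^m$ is even, so $x_0=2^m+3$ is odd; then $x_1=Q(x_0)=x_0(x_0-1)/2=x_0(2^{m-1}+1)=2^{m-1}x_0+x_0$, and because $x_0=2^m+3$ this equals $2^{m-1}x_0+2^m+3=2^{m-1}(x_0+2)+3$, which is \eqref{t1} at $j=1$; moreover $m-1\ge 1$ forces $2^{m-1}(x_0+2)$ to be even, so $x_1$ is odd. For the inductive step, assuming $x_j=2^{m-j}P_{j-1}+3$ is odd with $1\le j\le m-2$, I would use $m-j\ge 1$ to write $x_j-1=2^{m-j}P_{j-1}+2=2(2^{m-j-1}P_{j-1}+1)$, so that exactly one halving occurs and $x_{j+1}=Q(x_j)=x_j(2^{m-j-1}P_{j-1}+1)=2^{m-j-1}P_{j-1}x_j+x_j$. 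Replacing the trailing $x_j$ by $2^{m-j}P_{j-1}+3=2\cdot 2^{m-j-1}P_{j-1}+3$ and regrouping gives $x_{j+1}=2^{m-j-1}P_{j-1}(x_j+2)+3=2^{m-(j+1)}P_j+3$, which is \eqref{t1} at $j+1$; and since $j+1\le m-1$ the factor $2^{m-j-1}$ is still even, so $x_{j+1}$ is odd. This closes the induction and yields \eqref{t1} together with the oddness of $x_0,\dots,x_{m-1}$.

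The remaining assertions then follow quickly. Taking $j=m-1$ in the induction, $x_{m-1}=2P_{m-2}+3$ is odd, so $x_m=Q(x_{m-1})=x_{m-1}(x_{m-1}-1)/2=x_{m-1}(P_{m-2}+1)$, which is \eqref{t2}; and since each $x_i+2$ is odd the product $P_{m-2}$ is odd, hence $P_{m-2}+1$ is even and $x_m$ is an even number, as claimed. For the monotonicity, I would read off from the above the recursion $x_{j+1}=x_j\,(2^{m-j-1}P_{j-1}+1)$, valid for $0\le j\le m-2$ with $P_{-1}=1$; since $m-j-1\ge 1$ and $P_{j-1}\ge 1$, the multiplier is at least $2^{m-j-1}+1\ge 3$, and every $x_j$ is positive, so $x_j<x_{j+1}$, giving the increasing chain $x_0<x_1<\cdots<x_{m-1}$.

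I do not expect a genuine obstacle here: the argument is a bookkeeping induction, and essentially the only point requiring care is parity. One must track the fact that the power $2^{m-j}$ still carries at least one factor of $2$ throughout $0\le j\le m-1$ --- this is precisely the role of the hypothesis $j\le m-1$ --- because that single factor is what simultaneously keeps $x_j$ odd and makes $Q$ perform exactly one division by $2$ at the $j$-th step. Correspondingly, the increasing run necessarily terminates at $x_{m-1}$: at $j=m-1$ the power $2^{m-j}=2$ has only one factor of $2$, so the next term $x_m$ comes out even.
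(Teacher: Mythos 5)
Your proof is correct and follows essentially the same route as the paper's: the same induction on $j$, with the identical algebraic regrouping $x_{j+1}=2^{m-j-1}(x_0+2)\cdots(x_{j-1}+2)x_j+x_j=2^{m-(j+1)}(x_0+2)\cdots(x_j+2)+3$, and the final application of $Q$ to the odd $x_{m-1}$ to obtain \eqref{t2}. The only difference is that you make explicit the parity bookkeeping and the monotonicity estimate, which the paper leaves implicit.
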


\begin{proof}
We use induction. Since $x_{0}$ is odd,%
\[
x_{1}=Q(x_{0})=x_{0}\left(  \frac{2^{m}+2}{2}\right)  =(2^{m}+3)(2^{m-1}+1)
\]

Multiplying out the last expression and collecting terms%
\[
x_{1}=(2^{m}+3)2^{m-1}+2^{m}+3=2^{m-1}(2^{m}+3+2)+3=2^{m-1}(x_{0}+2)+3
\]

This proves (\ref{t1}) for $j=1$. If $m>2$ and (\ref{t1}) holds for $k<m-1$
then
\begin{align*}
x_{k+1}  & =x_{k}\left(  \frac{x_{k}-1}{2}\right)  \\
& =x_{k}[2^{m-k-1}(x_{0}+2)(x_{1}+2)\cdots(x_{k-1}+2)+1]\\
& =x_{k}2^{m-k-1}(x_{0}+2)(x_{1}+2)\cdots(x_{k-1}+2)+2^{m-k}(x_{0}%
+2)(x_{1}+2)\cdots(x_{k-1}+2)+3\\
& =2^{m-(k+1)}(x_{0}+2)(x_{1}+2)\cdots(x_{k-1}+2)(x_{k}+2)+3
\end{align*}

It follows by induction that (\ref{t1}) is true as long as $j<m$. For $j=m-1$
the application of $Q$ to the odd number $x_{m}=x_{j+1}$ gives (\ref{t2}).
\end{proof}

\section{The symmetric rule}

For comparison, we now consider the function $S$ which defines the recursion%
\begin{equation}
x_{n+1}=S(x_{n}) \label{sq1}%
\end{equation}

Like $Q$, if $x_{0}=2^{m}$ for some positive integer $m$ then $x_{m}=1$ and
$x_{m+1}=0.$ We conclude that the orbit of $2^{m}$ reaches 0 in $m+1$ steps.
It follows that $x_{n}=0$ for all $n\geq m+1$. The next lemma extends this
observation to similar but odd initial values. The appearance of $\pm$
reflects the symmetry in $S$ that was lacking in $Q$.

\medskip

\begin{lemma}
\label{L2}For every positive integer $m$ if $x_{0}=2^{m}\pm1$ then $x_{n}=0$
for all $n\geq\binom{m+1}{2}+2.$
\end{lemma}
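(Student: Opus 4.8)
The plan is to prove, by unwinding a one‑step reduction, the sharper statement that the orbit of $x_{0}=2^{m}+1$ reaches $0$ in exactly $\binom{m+1}{2}+2$ steps while the orbit of $x_{0}=2^{m}-1$ reaches $0$ in exactly $\binom{m+1}{2}$ steps. Since $0$ is a fixed point of $S$ (it is even and $S(0)=0$), once an orbit hits $0$ it remains there, so either count immediately yields $x_{n}=0$ for all $n\geq\binom{m+1}{2}+2$, the common bound comfortably covering both sign cases.

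The engine of the argument is the reduction: for $m\geq2$ the orbit of $2^{m}\pm1$ passes through $2^{m-1}\pm1$ (with the same choice of sign) after exactly $m$ iterations of $S$. To see this, note that $2^{m}\pm1$ is odd, so applying the odd branch and expanding the square gives
\[
x_{1}=S(2^{m}\pm1)=\frac{(2^{m}\pm1)^{2}-1}{4}=2^{m-1}\bigl(2^{m-1}\pm1\bigr).
\]
For $m\geq2$ the factor $2^{m-1}\pm1$ is odd, hence $2^{m-1}$ is exactly the power of $2$ dividing $x_{1}$, and the next $m-1$ applications of $S$ merely halve, producing $x_{m}=2^{m-1}\pm1$.

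Iterating this reduction, the orbit of $2^{m}+1$ reaches $2^{1}+1=3$ after $m+(m-1)+\cdots+2=\binom{m+1}{2}-1$ steps, and from there one checks directly $3\rightarrow2\rightarrow1\rightarrow0$, three more steps, for a total of $\binom{m+1}{2}+2$; similarly the orbit of $2^{m}-1$ reaches $2^{1}-1=1$ in $\binom{m+1}{2}-1$ steps and then $1\rightarrow0$, giving $\binom{m+1}{2}$. The only remaining work is to pin down the base of the recursion: verifying that these formulas are consistent with the direct computations at $m=1$ (where $2^{m}-1=1\rightarrow0$ in one step and $2^{m}+1=3\rightarrow2\rightarrow1\rightarrow0$ in three) and at $m=2$, where some intermediate quantities such as $2^{m-1}\pm1$ degenerate to $0$, $1$, or an even number. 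That bookkeeping at small $m$ is the only place requiring care; the computations are otherwise elementary and no genuine obstacle arises, so the whole statement follows by a routine induction on $m$.
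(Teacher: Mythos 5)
Your proposal is correct and follows essentially the same route as the paper: the key reduction $2^{m}\pm1\rightarrow2^{m-1}\pm1$ in $m$ steps (via $S(2^{m}\pm1)=2^{m-1}(2^{m-1}\pm1)$ followed by $m-1$ halvings), the telescoping sum $m+(m-1)+\cdots$, and a direct check of the short tail to $0$. The only difference is trivial bookkeeping about where the iteration bottoms out (you stop at $3$ or $1$, the paper at $2^{0}\pm1$), which does not change the argument.
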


\begin{proof}
First, consider $x_{0}=2^{m}+1$. Then%
\[
x_{1}=\left(  \frac{2^{m}}{2}\right)  \left(  \frac{2^{m}+2}{2}\right)
=2^{m-1}(2^{m-1}+1)
\]

Further applying $S$ a total of $m-1$ times gives%
\[
x_{m}=2^{m-1}+1
\]

If $m=1$ then $x_{0}=2+1=3$ and $x_{1}=2$ from which we obtain $x_{2}=1$ and
$x_{3}=0$. If $m>1$ then repeating the above argument yields%
\[
x_{m+(m-1)}=2^{m-2}+1
\]

Continuing this way we obtain%
\[
x_{m+(m-1)+(m-2)}=2^{m-3}+1
\]

and so on until%
\[
x_{m+(m-1)+\cdots+2+1}=2^{0}+1=2
\]

Now two more applications of $S$ lead to the fixed value 0. The total number
of applications of $S$ is therefore,%
\[
m+(m-1)+\cdots+2+1+2=\frac{m(m+1)}{2}+2=\binom{m+1}{2}+2
\]

Thus for all $n$ larger than the above number, $x_{n}=0$. A similar argument
shows that if $x_{0}=2^{m}-1$ then%
\[
x_{m+(m-1)+\cdots+2+1}=2^{0}-1=0
\]

so $x_{n}=0$ for all $n\geq\binom{m+1}{2}$.
\end{proof}

\medskip

There are positive integers $x_{0}$, e.g., $x_{0}=2^{p}(2^{m}-1)$ that reach
$2^{m}\pm1$ after several iterations of $S$. The orbits of all such initial
values reach 0 in a finite number of steps. What happens if the numbers
$2^{m}\pm1$ are never reached from some initial value $x_{0}$? The following
answers this question.

\medskip

\begin{theorem}
Every orbit of the recursion (\ref{sq1}) either reaches zero or it is
unbounded, i.e. escapes to infinity.
\end{theorem}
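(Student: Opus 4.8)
The plan is to reduce the dichotomy to the single statement that \emph{the fixed point $\{0\}$ is the only cycle of $S$}. This suffices: if an orbit $(x_{n})$ is bounded then it assumes only finitely many values, so by the pigeonhole principle some value recurs, the orbit is eventually periodic, and hence it eventually enters a cycle; if that cycle is $\{0\}$, the orbit reaches $0$. An unbounded orbit is unbounded, so the two cases together give precisely the asserted alternative.

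So suppose, toward a contradiction, that $C$ is a cycle with $C\neq\{0\}$. I would first record a few elementary facts. Since $S(0)=0$, any cycle meeting $0$ equals $\{0\}$, so $0\notin C$; hence $1\notin C$ (because $S(1)=0$) and then $3\notin C$ (because $S(3)=2$ and $S(2)=1$). Moreover $C$ cannot consist solely of even numbers, since $S$ strictly decreases positive even inputs: if $n=\min C$ were even then $n/2\in C$ with $0<n/2<n$. Thus $C$ contains an odd number, and by the previous remarks every odd element of $C$ is $\geq 5$. Let $N$ be the largest odd element of $C$.

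Next I would trace the orbit of $N$ around $C$. As $N$ is odd, $S(N)=(N^{2}-1)/4$, which is even since $N^{2}\equiv 1\pmod{8}$; setting $f=\nu_{2}(N^{2}-1)\geq 3$ (the $2$-adic valuation), the orbit then descends by repeated halving through the even numbers $(N^{2}-1)/2^{j}$ for $j=2,\dots,f-1$ and lands on the odd number $n'=(N^{2}-1)/2^{f}\in C$. Maximality of $N$ gives $n'\leq N$, that is, $2^{f}\geq (N^{2}-1)/N=N-1/N$; since $2^{f}$ is an integer while $N-1/N$ is not, this forces $2^{f}\geq N$. Now I would do the $2$-adic bookkeeping: $N$ odd implies $\gcd(N-1,N+1)=2$ and exactly one of $\nu_{2}(N-1),\nu_{2}(N+1)$ equals $1$; writing $g\geq 2$ for the larger of the two and $\delta\in\{\pm 1\}$ for the sign with $\nu_{2}(N-\delta)=g$, we have $f=g+1$, so $2^{g}\geq N/2$. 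Consequently the odd cofactor $(N-\delta)/2^{g}$ is a positive odd integer that is at most $(N+1)/(N/2)=2+2/N<3$ (as $N\geq 5$), hence equals $1$; therefore $N-\delta=2^{g}$, i.e.\ $N=2^{g}\pm 1$. But Lemma \ref{L2} asserts that the orbit starting from $2^{g}\pm 1$ is eventually $0$, which is incompatible with $N\in C$ and $0\notin C$. This contradiction shows that no nontrivial cycle exists, and the reduction in the first paragraph then completes the proof.

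The step I expect to be the crux is choosing the right quantity to control, namely recognizing that the \emph{largest odd element} $N$ of a hypothetical cycle must satisfy $2^{\nu_{2}(N^{2}-1)}\geq N$; once this is in hand, the short $2$-adic argument pins $N$ down to the form $2^{g}\pm 1$ already disposed of by Lemma \ref{L2}. The other ingredients — the reduction via eventual periodicity and the "no even-only cycle" observation — are routine.
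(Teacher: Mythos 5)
Your proof is correct, but it follows a genuinely different route from the paper. The paper argues directly on an arbitrary orbit: writing each odd value as $2^{m}k-1$ with $k$ odd, it shows that the next odd ``low point'' of the orbit is $k(2^{m-1}k-1)$, which either falls under Lemma \ref{L2} (when $k=1$, so the orbit reaches $0$) or exceeds the previous odd value by at least $1$; iterating gives the quantitative estimate $x_{m_{0}+\cdots+m_{p}}\geq x_{0}+p+1$, forcing unboundedness when no cofactor ever equals $1$. You instead reduce the theorem to the nonexistence of nonzero cycles (a bounded orbit in $\mathbb{N}_{0}$ takes finitely many values, hence is eventually periodic by pigeonhole), and then kill a hypothetical cycle $C\neq\{0\}$ by an extremal $2$-adic argument: the largest odd element $N\in C$ must satisfy $2^{\nu_{2}(N^{2}-1)}\geq N$, and since $\nu_{2}(N^{2}-1)=g+1$ with $g=\max\{\nu_{2}(N-1),\nu_{2}(N+1)\}$, the odd cofactor $(N\mp1)/2^{g}$ is squeezed below $3$ and must be $1$, so $N=2^{g}\pm1$ — contradicting Lemma \ref{L2} and $0\notin C$. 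Both arguments hinge on Lemma \ref{L2} for numbers of the form $2^{m}\pm1$, but they buy different things: the paper's computation is constructive and tracks how an orbit that avoids $0$ actually escapes (giving an explicit growth bound along the odd subsequence), whereas yours is structurally cleaner, non-quantitative, and isolates as an independent statement the fact that $\{0\}$ is the only cycle of $S$ — a fact the paper obtains only as a corollary of the theorem. Your steps all check out: the maximality inequality $2^{f}\geq N-1/N$ does force $2^{f}\geq N$ since $N-1/N$ is not an integer, the intermediate even values and the odd landing point $n'$ do lie in $C$ because a cycle is forward invariant, and the excluded small cases ($N=1,3$) are handled before the extremal argument begins.
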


\begin{proof}
We may start with an odd initial value, $x_{0}=2^{m_{0}}k_{0}-1$ where $k_{0}$
is odd. Note that%
\[
x_{1}=\left(  \frac{2^{m_{0}}k_{0}-2}{2}\right)  \left(  \frac{2^{m_{0}}k_{0}%
}{2}\right)  =2^{m_{0}-1}k_{0}(2^{m_{0}-1}k_{0}-1)
\]

It follows that%
\[
x_{m_{0}}=k_{0}(2^{m_{0}-1}k_{0}-1)
\]

If $k_{0}=1$ then Lemma \ref{L2} implies that the orbit reaches zero in a
finite number of steps. If $k_{0}\geq3$ then note that%
\[
x_{m_{0}}\geq3(2^{m_{0}-1}k_{0}-1)=2^{m_{0}}k_{0}+2^{m_{0}-1}k_{0}-3=2^{m_{0}%
}k_{0}+3(2^{m_{0}-1}-1)\geq2^{m_{0}}k_{0}%
\]

Therefore, if $k_{0}>1$ then for all $m_{0}\geq1$%
\[
x_{m_{0}}\geq x_{0}+1
\]

The odd number $x_{m_{0}}$ is the lowest point of the down-swing following
$x_{1}$ but its value exceeds the initial value $x_{0}$.

Next, let $m_{1}$ and $k_{1}$ be positive integers with $k_{1}$ odd such that
$x_{m_{0}}=2^{m_{1}}k_{1}-1$. Knowing what happens if $k_{1}=1$, assume that
$k_{1}\geq3$. Repeating the above calculation, we conclude that%
\[
x_{m_{0}+m_{1}}\geq x_{m_{0}}+1\geq x_{0}+2
\]

This process continues with $m_{2},k_{2}$ etc and is stopped only if $k_{i}=1$
for some positive integer $i$. Otherwise, for every $p\in\mathbb{N}$ we have%
\[
x_{m_{0}+m_{1}+\cdots+m_{p}}\geq x_{0}+p+1
\]
implying that the orbit is unbounded.
\end{proof}

\medskip

Like the earlier case of divide-or-choose-2 map, the above theorem does not
state that all orbits reach 0. So the existence of unbounded orbits is not
implied even though we expect that they do exist, pehaps abundantly. However,
the theorem does characterize all \textit{bounded} orbits of (\ref{sq1}): they
must all reach 0.

\section{Conclusion}

We discussed two quadratic maps of Collatz type and fully characterized their
bounded orbits. Proving the expected existence of unbounded orbits for these
maps is left as\ an open problem. Similar ideas and methods may extend to
similar types of quadratic maps and help us understand the underlying
complexity of these systems a little better.

\medskip

The maps $Q$ and $S$, as well as the Collatz map $T$ and all other similar
maps represent examples of \textit{bimodal systems} in the sense that each map
is divided into two parts: one part is defined on the set of all even integers
$\mathcal{E}$ and the other on the set of all odd integers $\mathcal{O}$.
Specifically, $Q$ consists of an even part $Q_{0}=n/2$ and an odd part
$Q_{1}(n)=\binom{n}{2}$. Importantly, neither of these maps is a self map of
its domain; i.e., $\mathcal{E}$ is not invariant under $Q_{0}$ and
$\mathcal{O}$ is not invariant under $Q_{1}$. These features characterize
bimodal systems, which are defined in a general way in \cite{HS}.

The basic properties of bimodal systems (more generally, \textit{polymodal
systems}) are discussed in \cite{HS} where it is also seen that economic and
social science models are often bimodal and therefore, provide a rich source
of applications for results on bimodal systems. As maps like $T$ and $Q$
illustrate, these systems are capable of generating nontrivial dynamics in the
form of orbits that repeatedly enter and exit their domains, namely,
$\mathcal{E}$ and $\mathcal{O}$.

With $Q$, all bounded orbits end in $m$-cycles for $m\geq2$, exhibiting
persistent oscillations between $\mathcal{E}$ and $\mathcal{O}$ with the same
being possibly true of unbounded orbits. Similarly, if Collatz's conjecture is
true then all orbits of $T$ eventually oscillate between $\mathcal{E}$ and
$\mathcal{O}$ as they alternate between 2 and 1. On the other hand, the
bounded orbits generated by $S$ always end in 0 and thus remain in
$\mathcal{E}$; they do not oscillate persistently between $\mathcal{E}$ and
$\mathcal{O}$. The unbounded orbits of $S$ may oscillate persistently between
$\mathcal{E}$ and $\mathcal{O}$.

\medskip



\begin{thebibliography}{9}                                                                                                %


\bibitem {LG}Lagarias, J.C., (Editor) \textit{The Ultimate Challenge: The 3x+1
Problem}, American Mathematical Society, Providence, 2010

\bibitem {HS}Sedaghat, H., \textit{Nonlinear Difference Equations: Theory with
Applications to Social Science Models}, Springer, New York, 2003
\end{thebibliography}
\end{document}